\theoremstyle{plain}
\newtheorem{lemma}{Lemma}[section]
\newtheorem{theorem}[lemma]{Theorem}
\newtheorem{corollary}[lemma]{Corollary}
\theoremstyle{definition}
\newtheorem{example}[lemma]{Example}
\newtheorem{remark}[lemma]{Remark}
\numberwithin{equation}{section} \thispagestyle{empty} \voffset
\begin{document}
\baselineskip 15truept
\title{Generalized Projections in $\mathbb Z_n$}
\date{}
\author{ Anil Khairnar and B. N. Waphare}
\address{\rm Department of Mathematics, Abasaheb Garware College, Pune-411004, India.}
 \email{\emph{anil.khairnar@mesagc.org; anil\_maths2004@yahoo.com}}
\address{\rm Center for Advanced Studies in Mathematics, Department of Mathematics, 
Savitribai Phule Pune University, Pune-411007, India.}
 \email{\emph{bnwaph@math.unipune.ac.in; waphare@yahoo.com}}
 \subjclass[2010]{Primary 11A25; Secondary 06A06} 
\maketitle {\bf \small Abstract:} {\small  
 We consider the ring $\mathbb Z_n$ (integers modulo $n$) with the partial order `$\leq$' given by `$a \leq b$ if either $a=b$ or $a\equiv ab~(mod~n)$'. In this paper, we obtain necessary and sufficient conditions for the poset ($\mathbb Z_n,~\leq$) to be a lattice.  
 }\\
 \noindent {\bf Keywords:} generalized projections, regular elements, nilpotent elements.
\section{Introduction} 
\indent An element $a$ in a commutative ring $R$ is said to be a {\it generalized projection} 
if $a^k=a$ for some $k \in \mathbb N$ with $k\geq 2$ (see \cite{Anil}); an element $a$ is called a {\it regular element} if $a=aba(=a^2b)$ for some element $b$ in the ring. It is proved by L$\acute{\text a}$szl$\acute{\text o}$ T$\acute{\text o}$th \cite{Las} that in $\mathbb Z_n$, an element is a generalized projection if and only if it is regular; in-fact the following result is proved. 
\begin{theorem} [\cite{Las}, Theorem 1] \label{th103} Let $n=\displaystyle \prod_{i=1}^k p_i^{\alpha_i}$ be the prime factorization of $n \in \mathbb N$ with $\alpha_i > 0$, for all $i$. For an integer $a \geq 1$, the following assertions are equivalent: \\
	i) $a$ is regular (mod $n$); \hspace{1cm}  ii) for every $i \in \{1,2,\dots, k \}$, either $p_i^{\alpha_i}| a$ or  $p_i \nmid a$; \\
	iii) $gcd(a, n)=gcd(a^2, n)$; \hspace{.5cm}  iv) $gcd(a,n)|n$ and $gcd(gcd(a,n),\frac{n}{gcd(a,b)})=1$ ; \\
	v) $a^{\varphi(n)+1}\equiv a ~(mod ~n)$;\hspace{1.2cm}
	vi) there exists an integer $m \geq 1$ such that $a^{m+1}\equiv a~ (mod ~n)$.	   	  	
\end{theorem}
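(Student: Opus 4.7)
The plan is to establish the equivalences cyclically, using the prime factorization condition (ii) as the central hub, since it reduces the problem to a statement about each prime power factor of $n$ and then lifts via the Chinese Remainder Theorem. A natural cycle is $(\mathrm{vi})\Rightarrow(\mathrm{i})\Rightarrow(\mathrm{iii})\Rightarrow(\mathrm{ii})\Rightarrow(\mathrm{v})\Rightarrow(\mathrm{vi})$, with $(\mathrm{iv})$ handled separately as equivalent to $(\mathrm{ii})$ via valuation bookkeeping.

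First I would dispatch the easy implications. The step $(\mathrm{v})\Rightarrow(\mathrm{vi})$ is immediate by taking $m=\varphi(n)$. For $(\mathrm{vi})\Rightarrow(\mathrm{i})$, if $a^{m+1}\equiv a\pmod n$ then writing $a=a\cdot a^{m-1}\cdot a$ exhibits $b=a^{m-1}$ as a witness that $a$ is regular (with the convention that for $m=1$ we take $b=1$). For $(\mathrm{i})\Rightarrow(\mathrm{iii})$, the inclusion $\gcd(a,n)\mid \gcd(a^2,n)$ is automatic; the reverse divisibility follows because $a\equiv a^2b\pmod n$ gives $\gcd(a^2,n)\mid \gcd(a^2 b,n)=\gcd(a,n)$ modulo an easy argument.

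The core step is $(\mathrm{iii})\Rightarrow(\mathrm{ii})$. I would work prime-by-prime: fix $i$, let $\beta_i=v_{p_i}(a)$, and observe that $v_{p_i}(\gcd(a,n))=\min(\beta_i,\alpha_i)$ and $v_{p_i}(\gcd(a^2,n))=\min(2\beta_i,\alpha_i)$. The hypothesis forces these to be equal, and a small case check on $\beta_i$ shows this happens only when $\beta_i=0$ or $\beta_i\geq\alpha_i$, which is exactly condition $(\mathrm{ii})$. For $(\mathrm{ii})\Rightarrow(\mathrm{v})$, I would verify $a^{\varphi(n)+1}\equiv a\pmod{p_i^{\alpha_i}}$ for each $i$ separately, then invoke CRT. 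Either $p_i^{\alpha_i}\mid a$, in which case both sides vanish mod $p_i^{\alpha_i}$, or $\gcd(a,p_i)=1$, in which case Euler's theorem combined with $\varphi(p_i^{\alpha_i})\mid\varphi(n)$ gives $a^{\varphi(n)}\equiv 1\pmod{p_i^{\alpha_i}}$.

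Finally I would prove $(\mathrm{ii})\Leftrightarrow(\mathrm{iv})$ by a direct valuation argument. Letting $d=\gcd(a,n)$, the condition $\gcd(d,n/d)=1$ is equivalent to $\min(v_{p_i}(d),\alpha_i-v_{p_i}(d))=0$ for every $i$, i.e.\ $v_{p_i}(d)\in\{0,\alpha_i\}$; unpacking $v_{p_i}(d)=\min(\beta_i,\alpha_i)$ shows this is precisely $\beta_i=0$ or $\beta_i\geq\alpha_i$, which is $(\mathrm{ii})$. The main obstacle I anticipate is not conceptual but notational, namely keeping the valuation case analysis in $(\mathrm{iii})\Rightarrow(\mathrm{ii})$ and $(\mathrm{ii})\Leftrightarrow(\mathrm{iv})$ tidy; once one commits to writing everything in terms of $v_{p_i}$ and then reassembling by CRT, all six conditions become parallel statements about each local factor $\mathbb{Z}/p_i^{\alpha_i}\mathbb{Z}$.
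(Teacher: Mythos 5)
Your proof is correct: every implication in the cycle $(\mathrm{vi})\Rightarrow(\mathrm{i})\Rightarrow(\mathrm{iii})\Rightarrow(\mathrm{ii})\Rightarrow(\mathrm{v})\Rightarrow(\mathrm{vi})$ checks out, and the valuation argument for $(\mathrm{ii})\Leftrightarrow(\mathrm{iv})$ is sound (you correctly read the paper's ``$\gcd(a,b)$'' in (iv) as the typo for $\gcd(a,n)$ that it is). Note that the paper itself supplies no proof of this statement --- it is imported verbatim from T\'oth's article --- but your localize-at-each-$p_i^{\alpha_i}$-and-reassemble-by-CRT strategy is the standard argument and is essentially what the cited source does.
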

  We denote by $GP(\mathbb Z_n)$, the set of generalized projections (i.e. the set of regular elements) and $P(\mathbb Z_n)=\{a\in \mathbb Z_n~|~a^2=a \}$, the set of projections in $\mathbb Z_n$. 
 Let $R$ be a commutative ring, the relation `$\leq$' defined by: for $a, b \in R$, `$a\leq b$ if and only if either $a=b$ or $a=ab$' is a partial order on $R$ (see \cite{Anil}). In particular, $(\mathbb Z_n,~\leq)$ is a poset with the smallest element $0$ and the largest element $1$. 
    It is known that $(P(\mathbb Z_n),~ \leq)$ is a lattice. However, Khairnar and Waphare \cite{Anil} proved that for any finite commutative ring $R$, $(GP(R),~ \leq)$ is a lattice, hence in particular, $(GP(\mathbb Z_n),~ \leq)$ is a lattice for every $n$.
  Whenever $n$ is a square-free integer, we get that $GP(\mathbb Z_n)=\mathbb Z_n$.   
     In general, $(\mathbb Z_n,~ \leq)$ is not a lattice, for example $(\mathbb Z_9,~ \leq)$ (see Figure \ref{f302}).
  In this paper, we give a necessary and sufficient conditions for the poset ($\mathbb Z_n,~\leq$) to be a lattice.\\ 
\indent   We denote by $U(\mathbb Z_n)$, the set of units in $\mathbb Z_n$ and $N(\mathbb Z_n)$, the set of nilpotents in $\mathbb Z_n$. In the following remark, we list observations required in a sequel.
  \begin{remark} \label{rm202} Let $n=\displaystyle \prod_{i=1}^k p_i^{\alpha_i}$ be the prime factorization of $n \in \mathbb N$ with $\alpha_i > 0$, for all $i$, and let $a\in \mathbb Z_n$. Then,\\
   (i) $a \in U(\mathbb Z_n)$ if and only if $a \equiv $ unit
       (mod $p_i$) for all $i\in \{1,2,\cdots, k\}$.\\
   (ii) $a\in N(\mathbb Z_n)$ if and only if $a \equiv $ zero 
       (mod $p_i$) for all $i\in \{1,2,\cdots, k\}$. \\
  (iii) $a\in GP(\mathbb Z_n)$ if and only if $a \equiv $ zero or
        unit (mod $p_i^{\alpha_i}$) for all $i\in \{1,2,\cdots, k\}$.           	
   \end{remark}
  
   \section{Upper covering projection and lower covering projection}
        In a poset $(P, \leq)$, $a < b$ denotes $a\leq b$ with $a \neq b$.       
   We say that $b$ is an {\it upper cover} of $a$ or $a$ is a {\it lower cover} of $b$ (denoted by $a\prec b$), if $a < b$ and there is no $c\in P$ such that $a < c < b$.  \\
   \indent The following theorem gives an existence of the unique lower cover and the unique upper cover of any element $a\in GP(R)\backslash P(R)$ in the poset $GP(R)$.
   \begin{theorem} [\cite{Anil}, Theorem 2.7] \label{th105} Let $R$ be a finite commutative ring. 
   	If $a \in GP(R)\backslash P(R)$, then there exist a unique $a_u\in GP(R)$ and a unique $a_l\in GP(R)$ such that 
   	$a_l \prec a \prec a_u$. Further, these unique elements are projections and 
   	for $b\in GP(R)$, $a < b$ if and only if $a_u \leq b$; and $b < a$ if and only if $b \leq a_l$.                               
   \end{theorem}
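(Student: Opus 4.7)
The plan is to invoke the structure theorem for finite commutative rings and perform the construction componentwise. First I would write $R \cong R_1 \times \cdots \times R_k$ as a product of finite commutative local rings. In each $R_i$ every element is either a unit or a nilpotent, and any nilpotent $x \in R_i$ satisfying $x^m = x$ with $m \geq 2$ must be $0$; hence $GP(R_i) = U(R_i) \cup \{0\}$ and $P(R_i) = \{0,\, 1_{R_i}\}$, and consequently $GP(R) = \prod_i GP(R_i)$.

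Given $a = (a_1,\ldots,a_k) \in GP(R) \setminus P(R)$, I would partition the indices as $I_0 = \{i : a_i = 0\}$, $I_1 = \{i : a_i = 1_{R_i}\}$, and $I_u = \{i : a_i \in U(R_i) \setminus \{1_{R_i}\}\}$; the hypothesis $a \notin P(R)$ forces $I_u \neq \emptyset$. Define the idempotents
\[
a_u := (u_1,\ldots,u_k),\quad u_i = 1_{R_i} \text{ on } I_1 \cup I_u,\ u_i = 0 \text{ on } I_0,
\]
\[
a_l := (\ell_1,\ldots,\ell_k),\quad \ell_i = 1_{R_i} \text{ on } I_1,\ \ell_i = 0 \text{ otherwise.}
\]
A componentwise calculation yields $a \cdot a_u = a$ and $a \cdot a_l = a_l$, so $a \leq a_u$ and $a_l \leq a$; and $I_u \neq \emptyset$ gives $a \neq a_u$ and $a \neq a_l$.

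The crux of the proof is the equivalence that for $b \in GP(R)$, $a < b$ if and only if $a_u \leq b$, together with its dual for $a_l$. For the forward direction, $a < b$ gives $a \neq b$ and $a = ab$; componentwise this reads $a_i = a_i b_i$, forcing $b_i = 1_{R_i}$ on $I_1 \cup I_u$ (where $a_i$ is a unit in $R_i$). A direct check then yields $a_u \cdot b = a_u$, so $a_u \leq b$. Conversely, $a_u \leq b$, via either disjunct of the order, implies $b_i = 1_{R_i}$ on $I_1 \cup I_u$, from which both $a = ab$ and $b \neq a$ follow (the latter because $a_i \neq 1_{R_i} = b_i$ on the nonempty set $I_u$). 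The dual argument uses that $b \in GP(R)$ combined with the relation $b_i(1_{R_i} - a_i) = 0$ forces $b_i = 0$ outside $I_1$, giving $b < a$ iff $b \leq a_l$.

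From these two equivalences, the covering statements $a_l \prec a \prec a_u$ and the uniqueness are essentially immediate: any $c \in GP(R)$ with $a < c < a_u$ would satisfy $a_u \leq c$ by the forward equivalence, contradicting $c < a_u$; and any other upper cover $a_u'$ of $a$ satisfies $a_u \leq a_u'$, so $a < a_u \leq a_u'$ forces $a_u = a_u'$ by the defining property of a cover. The dual handles $a_l$, and $a_u,\,a_l$ are projections by construction. The main subtlety, rather than any single hard step, is that the partial order does not factor strictly componentwise — globally $a \leq b$ requires $a = b$ or $a = ab$, not a per-coordinate mix of the two disjuncts. The argument sidesteps this by reducing $a < b$ to the single componentwise clause $a = ab$ (using $a \neq b$) and by treating the case $b = a_u$ separately when proving the backward equivalence.
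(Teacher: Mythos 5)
Your proof is correct, but note that the paper does not prove this statement at all: it is imported verbatim from \cite{Anil} (Theorem 2.7 there), so there is no in-paper argument to match yours against. Your route --- decomposing $R$ as a product of finite local rings, observing that $GP(R_i)=U(R_i)\cup\{0\}$ and $P(R_i)=\{0,1_{R_i}\}$, and defining $a_u$ and $a_l$ as the indicator idempotents of the unit-support and the $1$-support of $a$ --- is sound; I checked the key equivalences and the only delicate steps (reducing $a<b$ to the single clause $a=ab$, handling both disjuncts of $\leq$ in the converse, and using $b\in GP(R)$ so that $b_i(1_{R_i}-a_i)=0$ with $b_i$ a unit forces $a_i=1_{R_i}$) are all handled correctly. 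The comparison worth making is with the intrinsic construction the paper itself gestures at: Remark \ref{rm301} produces the upper covering projection as $a_u=a^{k-1}$ where $k\geq 2$ is minimal with $a^k=a$, a formula valid in any commutative ring without invoking the structure theorem, and the two constructions agree (componentwise $a_i^{k-1}$ is $0$ on the zero coordinates and $1$ on the unit coordinates). Your componentwise idempotents are exactly the general-ring analogue of the explicit $\mathbb{Z}_n$ formulas $b^{\varphi(n/b)}$ that the paper derives in Section 2 via the prime factorization; what your approach buys is a uniform treatment of $a_u$ and $a_l$ and a transparent proof of the two order equivalences, at the cost of invoking the classification of finite commutative rings, whereas the $a^{k-1}$ construction is more elementary but gives no equally clean closed form for $a_l$.
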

   With notations as in Theorem \ref{th105}, 
   the unique projection $a_l$ is called the {\it lower covering projection} of $a$ 
   and the unique projection $a_u$ is called the {\it upper covering projection} of $a$. If $a$ is a projection,
   then we assume that the lower covering projection of $a$ and the
   upper covering projection of $a$ is $a$ itself.\\
   \indent  Theorem \ref{th105} gives an existence of the upper covering projection and the lower covering projection of any element in $GP(\mathbb Z_n)\backslash P(\mathbb Z_n)$. In this section, we determine the upper covering projection and the lower covering projection of elements in $GP(\mathbb Z_n)\backslash P(\mathbb Z_n)$.\\ 
   \indent 
  The following lemma gives the conditions for strict comparability of a projection and a generalized projection. 
    \begin{lemma} \label{lm301} Let $n=\displaystyle \prod_{i=1}^k p_i^{\alpha_i}$ be the prime factorization of $n \in \mathbb N$ with $\alpha_i > 0$, for all $i$. Let $a\in GP(\mathbb Z_n)$ and $e, f\in P(\mathbb Z_n)\backslash \{1\}$. Then,\\
    	(1) $a < e$ if and only if for $i \in \{1,2,\cdots, k \}$, $e \equiv 0 ~(mod~p_i^{\alpha_i})$ implies that $a \equiv 0 ~(mod~p_i^{\alpha_i})$. \\
    	(2) $f < a$ if and only if for $j \in \{1,2,\cdots, k \}$, $f \nequiv 0 ~(mod~p_j^{\alpha_j})$ implies that $a \equiv 1 ~(mod~p_j^{\alpha_j})$.
   \end{lemma}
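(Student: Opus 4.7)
The plan is to reduce everything to a componentwise condition via the Chinese Remainder Theorem. Under the isomorphism $\mathbb{Z}_n \cong \prod_{i=1}^{k} \mathbb{Z}_{p_i^{\alpha_i}}$, write any element $x \in \mathbb{Z}_n$ as a tuple $(x_1, \ldots, x_k)$ with $x_i \equiv x \pmod{p_i^{\alpha_i}}$. Because each $\mathbb{Z}_{p_i^{\alpha_i}}$ is local, its only idempotents are $0$ and $1$, so $e \in P(\mathbb{Z}_n)$ corresponds to a Boolean tuple $(e_1,\ldots,e_k) \in \{0,1\}^k$, and similarly for $f$. By Remark \ref{rm202}(iii), $a \in GP(\mathbb{Z}_n)$ corresponds to a tuple $(a_1, \ldots, a_k)$ in which each $a_i$ is either $0$ or a unit modulo $p_i^{\alpha_i}$.

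For part (1), I would observe that the relation $a \leq e$ unfolds as $a \equiv ae \pmod{n}$, which by CRT is equivalent to $a_i \equiv a_i e_i \pmod{p_i^{\alpha_i}}$ for every $i$. I would then split on the value of $e_i$: if $e_i = 1$ the congruence is automatic, while if $e_i = 0$ the congruence collapses to $a_i \equiv 0 \pmod{p_i^{\alpha_i}}$. Assembling these cases gives precisely the implication $e \equiv 0 \pmod{p_i^{\alpha_i}} \Rightarrow a \equiv 0 \pmod{p_i^{\alpha_i}}$. The strictness $a \neq e$ corresponds to the existence of some index where $a_i \neq e_i$, which is encoded in the hypothesis that we are genuinely comparing distinct elements (the case $a = e$ being trivially consistent with the right-hand implication).

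For part (2), the argument is dual. The relation $f \leq a$ unfolds as $f \equiv fa \pmod{n}$, equivalently $f_i \equiv f_i a_i \pmod{p_i^{\alpha_i}}$ for all $i$. If $f_i = 0$ this is automatic; if $f_i = 1$ it becomes $1 \equiv a_i \pmod{p_i^{\alpha_i}}$, i.e.\ $a \equiv 1 \pmod{p_i^{\alpha_i}}$. This yields exactly the implication $f \not\equiv 0 \pmod{p_j^{\alpha_j}} \Rightarrow a \equiv 1 \pmod{p_j^{\alpha_j}}$. The hypothesis $e, f \neq 1$ ensures that at least one component is $0$, keeping the statement non-vacuous.

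The main obstacle is essentially bookkeeping: making sure that the componentwise description of $GP(\mathbb{Z}_n)$ from Remark \ref{rm202}(iii) is invoked correctly so that $a_i$ is unambiguously either $0$ or a unit when we set $a_i e_i = a_i$ or $f_i a_i = f_i$. No deep ideas are required once CRT has reduced the problem to local factors, since in each $\mathbb{Z}_{p_i^{\alpha_i}}$ the projections are only $0$ and $1$ and the relevant equations become immediate.
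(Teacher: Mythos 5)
Your proof is correct and is essentially the paper's argument made explicit: the paper likewise unfolds $a<e$ to $a(1-e)\equiv 0\ (mod~n)$ (resp.\ $f(1-a)\equiv 0\ (mod~n)$), reduces modulo each $p_i^{\alpha_i}$, and uses that an idempotent is $0$ or $1$ in each local factor --- you have simply named the CRT decomposition that the paper leaves implicit. Like the paper, you gloss over the distinction between $\leq$ and $<$ (both arguments really establish the equivalence for $\leq$, and strictness requires separately noting $a\neq e$, resp.\ $f\neq a$), but this is a shared, minor omission rather than a defect of your approach.
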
 
   \begin{proof} $(1)$ Let $a < e$ and $i \in \{1,2,\cdots, k \}$ be such that $e \equiv 0 ~(mod~p_i^{\alpha_i})$. Then $a(1-e)\equiv 0~(mod~n)$ and $1-e \nequiv 0 ~(mod~p_i^{\alpha_i})$. Therefore $a \equiv 0 ~(mod~p_i^{\alpha_i})$. Conversely, suppose that for $i \in \{1,2,\cdots, k \}$, $e \equiv 0 ~(mod~p_i^{\alpha_i})$ implies that $a \equiv 0 ~(mod~p_i^{\alpha_i})$. Then $a (1-e) \equiv 0~(mod~n)$. Thus $a<e$.  \\
   	$(2)$ Let $f < a$ and $j \in \{1,2,\cdots, k \}$ be such that $f \nequiv 0 ~(mod~p_j^{\alpha_j})$. Then $1-a \equiv 0 ~(mod~p_j^{\alpha_j})$. Therefore $a \equiv 1 ~(mod~p_j^{\alpha_j})$. Conversely, suppose that for $j \in \{1,2,\cdots, k \}$, $f \nequiv 0 ~(mod~p_j^{\alpha_j})$ implies that $a \equiv 1 ~(mod~p_j^{\alpha_j})$. This gives $ f (1-a) \equiv 0~(mod~n)$. Thus $f < a$.  	
  \end{proof}	  
   \begin{remark} [\cite{Anil}, Remark 3]\label{rm301} Let $a\in GP(\mathbb Z_n)$. 
   	Suppose $k\geq 2$ be the smallest integer such that $a^k=a$.
   	Then $(a^{k-1})^2=a^{2k-2}=a^ka^{k-2}=aa^{k-2}=a^{k-1}$. Therefore $a^{k-1}\in P(\mathbb Z_n)$, and $a^{k-1}=a_u$. 
   	Clearly, $a \leq a_u$; and $a=a_u$ if and only if $a \in P(\mathbb Z_n)$. 
   \end{remark}
   For any $a\in GP(\mathbb Z_n)\backslash P(\mathbb Z_n)$ the following theorem gives a construction for $a_u$.
    \begin{theorem} Let $n=\displaystyle \prod_{i=1}^k p_i^{\alpha_i}$ be the prime factorization of $n \in \mathbb N$ with $\alpha_i > 0$, for all $i$, and $a\in GP(\mathbb Z_n)\backslash P(\mathbb Z_n)$. If $a\in U(\mathbb Z_n)$, then $a_u=1$. If $a\notin U(\mathbb Z_n)$, then $a_u=b^{\varphi(\frac {n}{b})}$ where $b=\displaystyle \prod_{\substack{j=1 \\ a ~\equiv ~0~(mod~p_j^{\alpha_j}) }}^{k} p_j^{\alpha_j}$.    	
    \end{theorem}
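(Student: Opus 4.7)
The plan is to apply Theorem \ref{th105}, according to which $a_u$ is a projection characterized as the minimum of $\{f\in GP(\mathbb Z_n):a<f\}$. Using Remark \ref{rm202}(iii), partition $\{1,\dots,k\}$ as $S\sqcup T$ where $S=\{i:a\equiv 0\pmod{p_i^{\alpha_i}}\}$ and $T$ is its complement. Then $S=\emptyset$ exactly when $a\in U(\mathbb Z_n)$, and $T=\emptyset$ is excluded by $a\notin P(\mathbb Z_n)$, for otherwise $a\equiv 0\pmod n$ and $a$ would be a projection.

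For the case $a\in U(\mathbb Z_n)$, the argument is immediate: if $a<f$ then $a\equiv af\pmod n$, and multiplying by $a^{-1}$ yields $f=1$. Thus $1$ is the only strict upper bound of $a$, forcing $a_u=1$.

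For the case $a\notin U(\mathbb Z_n)$, set $b=\prod_{i\in S}p_i^{\alpha_i}$ and put $e=b^{\varphi(n/b)}$. I will verify three properties in turn: (a) $e\in P(\mathbb Z_n)$, (b) $a<e$, and (c) $e\le f$ for every $f\in GP(\mathbb Z_n)$ with $a<f$; together, by the minimality characterization of $a_u$ in Theorem \ref{th105}, these give $e=a_u$. For (a), reduce $e$ modulo each $p_i^{\alpha_i}$: if $i\in S$ then $b\equiv 0$ and $e\equiv 0$; if $i\in T$ then $\gcd(b,p_i^{\alpha_i})=1$ and $\varphi(p_i^{\alpha_i})\mid \varphi(n/b)$ (since $p_i^{\alpha_i}\mid n/b$ and by multiplicativity of Euler's totient), so Euler's theorem gives $e\equiv 1\pmod{p_i^{\alpha_i}}$; applying CRT then yields $e^2=e$. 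For (b), Lemma \ref{lm301}(1) applies because $\{i:e\equiv 0\pmod{p_i^{\alpha_i}}\}=S=\{i:a\equiv 0\pmod{p_i^{\alpha_i}}\}$, and $a\ne e$ since $a\notin P(\mathbb Z_n)$. For (c), $a<f$ gives $a(1-f)\equiv 0\pmod n$; reducing modulo $p_i^{\alpha_i}$ for $i\in T$, where $a$ is a unit, forces $f\equiv 1\pmod{p_i^{\alpha_i}}$, and combined with $e\equiv 0\pmod{p_i^{\alpha_i}}$ for $i\in S$, this gives $e(1-f)\equiv 0$ modulo every $p_i^{\alpha_i}$, hence $e\le f$.

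The only substantive computation is the verification that $e^2=e$, which hinges on the divisibility $\varphi(p_i^{\alpha_i})\mid \varphi(n/b)$ for $i\in T$ together with $\gcd(b,n/b)=1$; the remainder is routine bookkeeping via the index sets $S$ and $T$ guided by Lemma \ref{lm301}.
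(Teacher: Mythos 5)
Your proposal is correct and follows essentially the same route as the paper: construct $e=b^{\varphi(n/b)}$, show $a<e$ via the prime-power congruence criterion of Lemma \ref{lm301}, and show $e$ is below every element of $GP(\mathbb Z_n)$ strictly above $a$, which identifies $e$ with $a_u$ by Theorem \ref{th105}. The only differences are cosmetic: you prove the idempotency $e^2=e$ explicitly via Euler's theorem and CRT (the paper merely asserts it), and you handle the unit case by a direct cancellation argument rather than by citing Remark \ref{rm301}.
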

    \begin{proof} Let $a\in GP(\mathbb Z_n)\backslash P(\mathbb Z_n)$. If $a \in U(\mathbb Z_n)$ then by Remark \ref{rm301}, $a_u=1$. Suppose $a\notin U(\mathbb Z_n)$. By Remark \ref{rm202}, there exists $j \in \{1,2, \cdots, k\}$ such that $a\equiv 0~(mod~p_j^{\alpha_j})$, and  $a\equiv 0~or~unit~(mod~p_i^{\alpha_i})$ for all $i\neq j$. Let $b=\displaystyle \prod_{\substack{j=1 \\ a ~\equiv ~0~(mod~p_j^{\alpha_j}) }}^{k} p_j^{\alpha_j}$ and $a_u=b^{\varphi(\frac {n}{b})}$. Then $a_u \in P(\mathbb Z_n)$ and by Lemma \ref{lm301}, $a \leq a_u$. Let $e\in P(\mathbb Z_n)$ be such that $a < e$. If $e=1$, then $a_u \leq e$. Suppose $e \neq 1$. Again by Lemma \ref{lm301}, for $i \in \{1,2,\cdots, k \}$, $e \equiv 0 ~(mod~p_i^{\alpha_i})$ implies that $a \equiv 0 ~(mod~p_i^{\alpha_i})$. Hence, for $i \in \{1,2,\cdots, k \}$, $e \equiv 0 ~(mod~p_i^{\alpha_i})$ implies that $a_u \equiv 0 ~(mod~p_i^{\alpha_i})$. Thus $a_u \leq e$.    	
   \end{proof} 
   For any $a\in GP(\mathbb Z_n)\backslash P(\mathbb Z_n)$ the following theorem gives a construction for $a_l$.	
  \begin{theorem} Let $n=\displaystyle \prod_{i=1}^k p_i^{\alpha_i}$ be the prime factorization of $n \in \mathbb N$ with $\alpha_i > 0$, for all $i$, and $a\in GP(\mathbb Z_n)\backslash P(\mathbb Z_n)$. Then $a_l=b^{\varphi(\frac {n}{b})}$ where $b=\displaystyle \prod_{\substack{j=1 \\ a ~\nequiv ~1~(mod~p_j^{\alpha_j}) }}^{k} p_j^{\alpha_j}$.    	
   	\end{theorem}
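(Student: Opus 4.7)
The plan is to mirror the construction used for $a_u$ in the preceding theorem. Set $c:=n/b$, so that $\gcd(b,c)=1$ by the very definition of $b$, and let $e:=b^{\varphi(c)}$. The goal is to verify that $e$ is a projection, that $e<a$, and that every projection strictly below $a$ is $\leq e$; Theorem~\ref{th105} will then force $e=a_l$.

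First I would analyse $e$ via the Chinese Remainder Theorem. For each $j$ with $p_j^{\alpha_j}\mid b$, since $\varphi(c)\geq 1$, clearly $p_j^{\alpha_j}\mid b^{\varphi(c)}$, so $e\equiv 0\pmod{p_j^{\alpha_j}}$. For each $i$ with $p_i^{\alpha_i}\mid c$ one has $\gcd(b,p_i^{\alpha_i})=1$ and $\varphi(p_i^{\alpha_i})\mid\varphi(c)$, so Euler's theorem gives $e\equiv 1\pmod{p_i^{\alpha_i}}$. Thus $e$ reduces to $0$ or $1$ modulo every $p_i^{\alpha_i}$, and Remark~\ref{rm202}(iii) yields $e\in P(\mathbb{Z}_n)$ (the degenerate case $c=1$, i.e., $b=n$, simply gives $e=0$). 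Note also that $b\neq 1$: otherwise $a\equiv 1\pmod{p_i^{\alpha_i}}$ for every $i$ would force $a=1\in P(\mathbb{Z}_n)$, contradicting $a\notin P(\mathbb{Z}_n)$; hence $e\neq 1$. The inequality $e<a$ then follows from the converse direction of Lemma~\ref{lm301}(2) applied with $f:=e$, since $e\not\equiv 0\pmod{p_j^{\alpha_j}}$ forces $p_j^{\alpha_j}\mid c$ and so, by the construction of $b$, $a\equiv 1\pmod{p_j^{\alpha_j}}$.

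For maximality, let $f\in P(\mathbb{Z}_n)$ with $f<a$. The case $f=0$ is immediate, and $f=1$ is impossible because $1\leq x$ forces $x=1\neq a$. In the remaining case, the forward direction of Lemma~\ref{lm301}(2) shows that $f\not\equiv 0\pmod{p_j^{\alpha_j}}$ implies $a\equiv 1\pmod{p_j^{\alpha_j}}$, hence $p_j^{\alpha_j}\mid c$, hence $e\equiv 1\pmod{p_j^{\alpha_j}}$. Since $f$ and $e$ are each $0$ or $1$ modulo every $p_i^{\alpha_i}$, this is exactly the assertion $fe\equiv f\pmod n$, i.e., $f\leq e$. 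Finally, Theorem~\ref{th105} gives $e\leq a_l$ (as $e<a$), while the maximality just proved applied with $f:=a_l$ gives $a_l\leq e$; hence $e=a_l$. The main technical point is the CRT computation of $e$ modulo each prime power together with the edge cases $b=1$ and $c=1$; once these are handled the argument is a clean dual of the preceding theorem, with Lemma~\ref{lm301}(2) playing the role that Lemma~\ref{lm301}(1) played there.
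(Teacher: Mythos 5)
Your proposal is correct and follows essentially the same route as the paper: verify via Euler's theorem/CRT that $b^{\varphi(n/b)}$ is a projection lying below $a$, then use Lemma \ref{lm301}(2) to show every projection strictly below $a$ lies below it, and conclude via Theorem \ref{th105}. You merely make explicit a few points the paper leaves implicit (the projection check, the edge cases $b=1$ and $c=1$, and the final appeal to uniqueness of $a_l$).
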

   	\begin{proof} Let $a \in GP(\mathbb Z_n)\backslash P(\mathbb Z_n)$. If $a\equiv 1~(mod ~p_j^{\alpha_j})$ for all $j \in \{1,2,\cdots, k\}$ then $a-1\equiv 0~(mod~n)$. Hence $a=1\in P(\mathbb Z_n)$, a contradiction. Therefore there exists $j \in \{1,2,\cdots, k\}$ such that $a \nequiv ~1~(mod~p_j^{\alpha_j})$. Let $b=\displaystyle \prod_{\substack{j=1 \\ a ~\nequiv ~1~(mod~p_j^{\alpha_j}) }}^{k} p_j^{\alpha_j}$ and $a_l=b^{\varphi(\frac {n}{b})}$.   		
   We prove that $a_l \leq a$. Let $i\in \{1,2,\cdots, k\}$ be such that $a_l\nequiv 0~(mod~p_i^{\alpha_i}) $. Then $b\nequiv 0~(mod~p_i^{\alpha_i})$ and hence $a \equiv 1~(mod~p_i^{\alpha_i})$. This yields, $a_l(a-1) \equiv 0~(mod~n)$. Therefore $a_l \leq a$. Let $f \in P(\mathbb Z_n)$ be such that $f < a$, and $j\in \{1,2,\cdots, k\}$ be such that $f \nequiv 0~(mod~p_j^{\alpha_j})$. Then by Lemma \ref{lm301}, we get $a \equiv 1~(mod~p_j^{\alpha_j})$. Consequently, $b \nequiv 0~(mod~p_j^{\alpha_j})$ and hence $a_l \nequiv 0~(mod~p_j^{\alpha_j})$. This implies that $a_l \equiv 1~(mod~p_j^{\alpha_j})$. Therefore $f(1-a_l) \equiv 0~(mod~n) $. Thus $f \leq a_l$.   
   \end{proof}	
    Let $P$ be a poset and $a,b \in P$. The {\it join} of $a$ and $b$, denoted by
     $a\vee b$, is defined as $a \vee b = \sup~ \{a, b\}$. The {\it meet} of $a$ and $b$, denoted by $a\wedge b$, is defined as $a \wedge b= inf~ \{a, b\}$.\\
    \indent  We conclude this section with the following examples.\\
           \begin{figure}[h]
      	\begin{center}
      		\begin{tikzpicture}[scale=.65]     
      		
      		\draw (-6,0)--(-6,2)--(-6,4)--(-6,6);       
      		
      		\node [below] at(-6,0) {$0$}; \draw[fill=white](-6,0) circle(.06);
      		\node [left] at(-6,2) {$2$};  \draw[fill=white](-6,2) circle(.06);
      		\node [left] at(-6,4) {$3$}; \draw[fill=white](-6,4) circle(.06);    
      		\node [above] at(-6,6) {$1$}; \draw[fill=white](-6,6) circle(.06); 
      		
      		\node[below] at (-6,-1) {Lattice of $ \mathbb Z_{4}$};

      		\draw (4,0)--(4,2)--(4,4); 
      		\draw (4,0)--(2,2)--(4,4); 
      		\draw (4,0)--(6,2)--(4,4);  
      		
      		\draw (2,2)--(0,4)--(2,6); 
      		\draw (2,2)--(2,4)--(2,6); 
      		\draw (4,4)--(2,6);     
      		
      		\node [below] at(4,0) {$0$}; \draw[fill=white](4,0) circle(.06);
      		\node [left] at(4,2) {$6$};  \draw[fill=white](4,2) circle(.06);
      		\node [above] at(4,4) {$5$}; \draw[fill=white](4,4) circle(.06);    
      		\node [left] at(2,2) {$4$}; \draw[fill=white](2,2) circle(.06); 
      		\node [right] at(6,2) {$2$}; \draw[fill=white](6,2) circle(.06); 
      		
      		\node [left] at(0,4) {$3$}; \draw[fill=white](0,4) circle(.06); 
      		\node [above] at(2,6) {$1$}; \draw[fill=white](2,6) circle(.06);
      		\node [left] at(2,4) {$7$}; \draw[fill=white](2,4) circle(.06); 
      		
      		\node[below] at (4,-1) {Lattice of $ \mathbb Z_{8}$};
      		\end{tikzpicture}
      	\end{center}
      	
      	\caption{} \label{f301}
      	
      \end{figure}  
     \indent  In the following example, $n \in \mathbb N$ is not square-free but the poset $\mathbb Z_n$ is a lattice.
    \begin{example} \label{ex301} Consider the ring $\mathbb Z_4$. Then $GP(\mathbb Z_4)= \{0,1,3 \}$ and $N(\mathbb Z_4)=\{0,2\} $. Note that $4$ is not square-free but the poset $\mathbb Z_4$ is a lattice (see Figure \ref{f301}). Also, the nilpotent element $2$ possess  unique upper cover.
    \end{example}        
  \begin{example} \label{ex302} Consider the ring $\mathbb Z_8$. Then $GP(\mathbb Z_8)= \{0,1,3,5,7 \}$ and $N(\mathbb Z_8)=\{0,2,4,6\} $.  Note that $8$ is not square-free but the poset $\mathbb Z_8$ is a lattice (see Figure \ref{f301}). Also, each of $2$ and $6$ possess unique upper covers but $4$ does not possess unique upper cover.   	  
  \end{example} 	  
   	
       \begin{figure}[h]
       	\begin{center}
       		\begin{tikzpicture}[scale=.65]     
       		
       		\draw (-4,0)--(-4,4)--(-4,6); 
       		\draw (-4,0)--(-6,4)--(-4,6); 
       		\draw (-4,0)--(-8,4)--(-4,6);  
       		
       		\draw (-4,0)--(-2,2)--(-2,4)--(-4,6); 
       		\draw (-4,0)--(0,2)--(0,4)--(-4,6); 
       		\draw (-2,2)--(0,4);
       		\draw (0,2)--(-2,4);    
       		
       		\node [below] at(-4,0) {$0$}; \draw[fill=white](-4,0) circle(.06);
       		\node [left] at(-4,4) {$8$};  \draw[fill=white](-4,4) circle(.06);
       		\node [above] at(-4,6) {$1$}; \draw[fill=white](-4,6) circle(.06);    
       		\node [left] at(-6,4) {$2$}; \draw[fill=white](-6,4) circle(.06); 
       		\node [left] at(-8,4) {$5$}; \draw[fill=white](-8,4) circle(.06); 
       		
       		\node [left] at(-2,2) {$3$}; \draw[fill=white](-2,2) circle(.06); 
       		\node [left] at(-2,4) {$4$}; \draw[fill=white](-2,4) circle(.06);
       		\node [right] at(0,2) {$6$}; \draw[fill=white](0,2) circle(.06);
       		\node [right] at(0,4) {$7$}; \draw[fill=white](0,4) circle(.06);
       		
       		\node[below] at (-4,-1) {Poset of $ \mathbb Z_{9}$};

       		\draw (6,0)--(6,2)--(6,5)--(6,6); 	
       		\draw (6,0)--(4,2)--(4,3)--(4,4)--(6,5);
       		\draw (6,0)--(8,2)--(6,5);
       		\draw (8,2)--(7,5)--(6,6);
       		\draw (8,2)--(8,3)--(8,4)--(8,5)--(6,6);
       		
       		\node [below] at(6,0) {$0$}; \draw[fill=white](6,0) circle(.06);
       		\node [left] at(6,2) {$2$}; \draw[fill=white](6,2) circle(.06);
       		\node [left] at(6,5) {$7$}; \draw[fill=white](6,5) circle(.06);
       		\node [above] at(6,6) {$1$}; \draw[fill=white](6,6) circle(.06);
       		\node [left] at(4,2) {$8$}; \draw[fill=white](4,2) circle(.06);
       		\node [left] at(4,3) {$4$}; \draw[fill=white](4,3) circle(.06);
       		\node [left] at(4,4) {$10$}; \draw[fill=white](4,4) circle(.06);
       		\node [right] at(8,2) {$6$}; \draw[fill=white](8,2) circle(.06);
       		\node [left] at(7,5) {$11$}; \draw[fill=white](7,5) circle(.06);
       		\node [right] at(8,3) {$3$}; \draw[fill=white](8,3) circle(.06);
       		\node [right] at(8,4) {$9$}; \draw[fill=white](8,4) circle(.06);
       		\node [right] at(8,5) {$5$}; \draw[fill=white](8,5) circle(.06);
       		
       		\node[below] at (6,-1) {Lattice of $ \mathbb Z_{12}$};
       		
       		\end{tikzpicture}
       	\end{center}
       	
       	\caption{} \label{f302}
       	
       \end{figure}    
    \indent In the following example, the poset $\mathbb Z_n$ is not a lattice. Also, none of the nilpotent elements possess unique upper cover.
     \begin{example} \label{ex303} Consider the ring $\mathbb Z_9$. Then $GP(\mathbb Z_9)= \{0,1,2,4,5,7,8 \}$ and $N(\mathbb Z_9)=\{0,3,6\} $. By Figure \ref{f302}, the poset $\mathbb Z_9$ is not a lattice. Note that $3 \vee 6$ and $4 \wedge 7$ do not exist. Also, each of $3$ and $6$ do not possess unique upper covers and each of $4$ and $7$ do not possess unique lower covers.    	  
     \end{example}
     In the following example, $n$ is not square-free but the poset $\mathbb Z_n$ is a lattice.   
     \begin{example} \label{ex304} Consider the ring $\mathbb Z_{12}$. Then $GP(\mathbb Z_{12})= \{0,1,3,4,5,7,8,9,11 \}$ and $N(\mathbb Z_{12})=\{0,6\} $. Note that $12$ is not square-free but the poset $\mathbb Z_{12}$ is a lattice (see Figure \ref{f302}). Observe that, the nilpotent element $6$ does not possess an unique upper cover.    	  
     \end{example} 
    \indent In the next section, we give a necessary and sufficient condition for the existence of supremum and infimum of any two elements of the poset $\mathbb Z_n$.
    \section{Existence of $a\vee b$ and $a\wedge b$ for $a,b \in \mathbb Z_n$} 
    For $x \in \mathbb Z_n$, the ideal generated by $x$ is denoted by $(x)$.\\
  \indent The following theorem characterizes the existence of $a\vee b$ for $a,b \in \mathbb Z_n$.    
   \begin{theorem} \label{th307} Let $n=\displaystyle \prod_{i=1}^k p_i^{\alpha_i}$ be the prime factorization of $n \in \mathbb N$ with $\alpha_i > 0$, for all $i$. Let $a, b \in \mathbb Z_n $ be incomparable and $d=gcd(gcd(a,b),n)$. Then, $a \vee b$ exists if and only if the coset $(\frac{n}{d})+1$ has the smallest element.   	
   \end{theorem}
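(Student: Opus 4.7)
The plan is to pin down the set of upper bounds of $\{a,b\}$ in $(\mathbb Z_n,\leq)$ and to show that it coincides precisely with the coset $(\frac{n}{d})+1$; the equivalence in the theorem will then be immediate from the definition of supremum.

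First I observe that since $a$ and $b$ are incomparable, any upper bound $c$ of $\{a,b\}$ must differ from both $a$ and $b$: for example, $c=a$ would force $b\leq a$, contradicting incomparability. Hence by the very definition of $\leq$, the upper bounds of $\{a,b\}$ are exactly those $c\in\mathbb Z_n$ for which $a(1-c)\equiv 0\pmod n$ and $b(1-c)\equiv 0\pmod n$ simultaneously.

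The technical heart of the argument, and what I expect to be the main obstacle, is to show that this pair of congruences collapses to the single congruence $d(1-c)\equiv 0\pmod n$, equivalently $c\equiv 1\pmod{\frac{n}{d}}$. The forward direction rests on an integer identity: since $d=\gcd(\gcd(a,b),n)=\gcd(a,b,n)$, one can write $d=xa+yb+zn$ for integers $x,y,z$, and then $d(1-c)=x\cdot a(1-c)+y\cdot b(1-c)+z(1-c)n$ is a $\mathbb Z$-linear combination of multiples of $n$. The reverse direction is easier: since $d\mid a$ and $d\mid b$, both $a(1-c)$ and $b(1-c)$ are integer multiples of $d(1-c)$, hence multiples of $n$ once $d(1-c)\equiv 0\pmod n$.

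Once these two implications are in hand, the set of upper bounds of $\{a,b\}$ is exactly the coset $(\frac{n}{d})+1$, and $a\vee b=\sup\{a,b\}$ exists if and only if this coset has a $\leq$-least element, which is precisely the statement of the theorem. As a consistency check one can revisit Example \ref{ex303}: for $a=3$, $b=6$ in $\mathbb Z_9$ we have $d=3$, the coset $(3)+1=\{1,4,7\}$ is the full set of upper bounds, and $4,7$ are incomparable in this coset, so no least element exists, matching the non-existence of $3\vee 6$.
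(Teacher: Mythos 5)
Your proof is correct, and it shares the paper's overall architecture: both arguments identify the set of (strict) upper bounds of $\{a,b\}$ with the coset $(\frac{n}{d})+1$ and then observe that $a\vee b$ exists precisely when that set has a least element, which is immediate from the definition of supremum. Where you diverge is in how the identification is proved. The paper works locally, prime by prime: it introduces the exponents $\beta_i,\gamma_i$ of $p_i$ in $a$ and $b$, verifies that $\prod_i p_i^{\max\{\alpha_i-\beta_i,\,\alpha_i-\gamma_i,\,0\}}=\frac{n}{d}$, and translates the two congruences $a(c-1)\equiv 0$ and $b(c-1)\equiv 0 \pmod n$ into lower bounds on the $p_i$-adic valuation of $c-1$. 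You argue globally via B\'ezout: writing $d=xa+yb+zn$ gives that the two congruences force $d(c-1)\equiv 0\pmod n$, i.e.\ $c\equiv 1\pmod{\frac{n}{d}}$, and the converse follows from $d\mid a$ and $d\mid b$. Your route is a little cleaner in that it never invokes the prime factorization of $n$ (which the theorem's hypothesis supplies but your argument does not need), whereas the paper's valuation bookkeeping is what makes the identity $m=\frac{n}{d}$ transparent and is reused in the companion result for $a\wedge b$. You also correctly handle the one point that needs care --- that incomparability forces every upper bound to differ from both $a$ and $b$, so the upper-bound condition reduces to the two congruences --- and your $\mathbb Z_9$ sanity check is accurate. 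No gaps.
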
  
   \begin{proof} For each $i \in \{1,2, \cdots, k\}$, let $\beta_i, \gamma_i \in \mathbb W=\mathbb N \bigcup \{0\}$ be the largest powers of prime $p_i$ such that $a \equiv 0~(mod ~p_i^{\beta_i})$ and $b \equiv 0~(mod ~p_i^{\gamma_i})$ respectively. Let $f_i=max\{(\alpha_i-\beta_i), (\alpha_i-\gamma_i), 0\}$ and $m=\displaystyle \prod_{i=1}^k p_i^{f_i}$. 
  Then $m=\frac{n}{d}$.	
   	Let $c \in \mathbb Z_n$ and for each $i \in \{1,2, \cdots, k\}$, let $t_i \in \mathbb N$ be the largest powers of prime $p_i$ such that $c \equiv 1~(mod ~p_i^{t_i})$. Then, $a < c$ and $ b < c$, if and only if $a(c-1)\equiv 0~(mod~n)$ and $b(c-1)\equiv 0~(mod~n)$, if and only if $t_i \geq (\alpha_i-\beta_i), (\alpha_i-\gamma_i)$ for all $i$, if and only if $c-1 \in (\frac{n}{d})$, if and only if $c \in (\frac{n}{d})+1$.\\
   	\indent Suppose $a\vee b$ exists. Since $a$ and $b$ are incomparable, we have $a < a\vee b$ and $b < a \vee b$. This yields $a\vee b \in (\frac{n}{d})+1$. Let $x \in (\frac{n}{d})+1$. Then $a < x $ and $b < x$. Therefore $a\vee b \leq x$. Thus $a\vee b$ is the smallest element of the coset $(\frac{n}{d})+1$. Conversely, suppose that the coset $(\frac{n}{d})+1$ has the smallest element, say $e \in (\frac{n}{d})+1$. This yields $a < e$ and $b < e$. We claim that $a\vee b =e$. Let $f \in \mathbb Z_n$ be such that $a < f$ and $b < f$. Then $f\in (\frac{n}{d})+1$. Therefore $e \leq f$. Thus $a\vee b=e$.   	
   \end{proof} 
   From the proof of Theorem \ref{th307}, it is clear that, if $a\vee b$ exists, then $a \vee b$ is the least element of the coset $(\frac{n}{d})+1$. Also, if the coset $(\frac{n}{d})+1$ has the smallest element $e$, then $a \vee b=e$. \\
   \indent  The following corollary is an immediate consequence of Theorem \ref{th307}.
   \begin{corollary} \label{cr401} Let $n\in \mathbb N,~ n > 1$ and $S=\{d \in \mathbb N~|~ d=gcd(gcd(a,b),n),$ for some incomparable elements $a,b \in \mathbb Z_n \}$. Then, $\mathbb Z_n$ is a lattice if and only if  every coset in $\{ (\frac{n}{d})+1~|~ d \in S\}$ has smallest element.
   \end{corollary}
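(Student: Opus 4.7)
The plan is to reduce the lattice condition to the existence of joins only, and then quote Theorem \ref{th307} verbatim. First I would note that $\mathbb{Z}_n$ is a finite poset with least element $0$ and greatest element $1$. In any finite poset possessing a least element, the existence of all pairwise joins automatically forces the existence of all pairwise meets, since the set of common lower bounds of any pair is non-empty (it contains $0$) and finite, so its supremum exists as soon as binary joins do. Consequently, $\mathbb{Z}_n$ is a lattice if and only if $a\vee b$ exists for every pair $a,b\in \mathbb{Z}_n$.

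Next I would dispose of comparable pairs trivially: if $a\leq b$ or $b\leq a$, then $a\vee b$ is simply the larger element. Hence the lattice property for $\mathbb{Z}_n$ is equivalent to the existence of $a\vee b$ for every pair of \emph{incomparable} elements $a,b\in \mathbb{Z}_n$. For such a pair, writing $d = \gcd(\gcd(a,b),n)$, Theorem \ref{th307} says precisely that $a\vee b$ exists if and only if the coset $(\tfrac{n}{d})+1$ admits a smallest element.

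Finally I would collect these equivalences. As the pair $(a,b)$ varies over all incomparable pairs in $\mathbb{Z}_n$, the corresponding value $d=\gcd(\gcd(a,b),n)$ varies exactly over the set $S$ by definition, so the family of cosets appearing in Theorem \ref{th307} is exactly $\{(\tfrac{n}{d})+1 \mid d\in S\}$. Therefore $\mathbb{Z}_n$ is a lattice if and only if every coset in this family possesses a smallest element, which is the statement of the corollary. The only slightly non-routine step is the reduction from ``lattice'' to ``all joins exist'', but this is a standard fact for finite posets bounded below; everything else is a direct invocation of Theorem \ref{th307}.
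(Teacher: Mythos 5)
Your proposal is correct and matches the paper's intent: the paper states this corollary as an immediate consequence of Theorem \ref{th307} without writing out a proof, and your argument is exactly the natural filling-in of that claim. The one step you make explicit that the paper leaves tacit --- that in a finite poset with least element $0$ the existence of all binary joins already forces all binary meets, so only the join condition of Theorem \ref{th307} needs to be checked --- is the standard fact and is handled correctly.
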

  \indent  In the following theorem, we characterize the existence of $a\wedge b$ for $a,b \in \mathbb Z_n$. 	 
  \begin{theorem} \label{th308} Let $n=\displaystyle \prod_{i=1}^k p_i^{\alpha_i}$ be the prime factorization of $n \in \mathbb N$ with $\alpha_i > 0$, for all $i$. Let $a, b \in \mathbb Z_n $ be incomparable and $d=gcd(gcd(a-1,b-1),n)$. Then, $a \wedge b$ exists if and only if the ideal $(\frac{n}{d})$ has the largest element.   	
  \end{theorem}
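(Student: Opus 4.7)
The plan is to mirror the proof of Theorem~\ref{th307}, swapping upper and lower bounds. The pivotal observation is that for $c \in \mathbb Z_n$, the relation $c \leq a$ is equivalent to $c=a$ or $c(a-1) \equiv 0~(mod~n)$; since $a$ and $b$ are incomparable, the cases $c=a$ and $c=b$ yield no common lower bound, so the set $L$ of lower bounds of $\{a,b\}$ in $\mathbb Z_n$ coincides with
\[
L=\{c \in \mathbb Z_n~|~c(a-1)\equiv 0 \equiv c(b-1)~(mod~n)\}.
\]
The theorem will follow once $L$ is identified with the ideal $(\tfrac{n}{d})$.

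To carry this out, I would replay the $p_i$-adic bookkeeping from the proof of Theorem~\ref{th307}, but with $a-1$ and $b-1$ in place of $a$ and $b$. For each $i\in\{1,\dots,k\}$, let $\beta_i,\gamma_i \geq 0$ be the largest exponents with $a-1 \equiv 0~(mod~p_i^{\beta_i})$ and $b-1 \equiv 0~(mod~p_i^{\gamma_i})$, put $f_i=\max\{\alpha_i-\beta_i,\,\alpha_i-\gamma_i,\,0\}$, and set $m=\prod_{i=1}^{k}p_i^{f_i}$. A prime-by-prime check shows that $c \in L$ iff $p_i^{f_i} \mid c$ for every $i$, i.e.\ iff $c \in (m)$. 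A short valuation computation then gives $m=\tfrac{n}{d}$: the $p_i$-adic valuation of $d$ equals $\min\{\beta_i,\gamma_i,\alpha_i\}$, so the $p_i$-exponent of $\tfrac{n}{d}$ is $\alpha_i-\min\{\beta_i,\gamma_i,\alpha_i\}=\max\{\alpha_i-\beta_i,\,\alpha_i-\gamma_i,\,0\}=f_i$. Hence $L=(\tfrac{n}{d})$.

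With $L=(\tfrac{n}{d})$ established, the equivalence is a routine sup/inf argument: if $a \wedge b$ exists, then it sits in $L$ and dominates every element of $L$ under $\leq$, so it is the largest element of $L$; conversely, a largest element $e$ of $(\tfrac{n}{d})$ is a lower bound of $\{a,b\}$ dominating every other lower bound, hence $e=a \wedge b$. I expect the main technical obstacle to be the valuation identity $m=\tfrac{n}{d}$, but this is essentially a transcription of the analogous step in Theorem~\ref{th307} after the substitution $a,b \mapsto a-1,\,b-1$; once it is in place, the rest of the argument is mechanical.
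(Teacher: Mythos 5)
Your proposal is correct and follows essentially the same route as the paper's proof: identify the common lower bounds of $\{a,b\}$ with the ideal $(\frac{n}{d})$ via the prime-by-prime exponent computation, then run the routine inf argument. You are in fact slightly more explicit than the paper on two points it glosses over, namely the exclusion of the degenerate cases $c=a$, $c=b$ using incomparability, and the valuation identity $m=\frac{n}{d}$, which the paper merely asserts.
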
  
  \begin{proof} For each $i \in \{1,2, \cdots, k\}$, let $\beta_i, \gamma_i \in \mathbb W$ be the largest powers of prime $p_i$ such that $a \equiv 1~(mod ~p_i^{\beta_i})$ and $b \equiv 1~(mod ~p_i^{\gamma_i})$ respectively. Let $f_i=max\{(\alpha_i-\beta_i), (\alpha_i-\gamma_i), 0\}$ and $m=\displaystyle \prod_{i=1}^k p_i^{f_i}$.
  Then $m=\frac{n}{d}$.
  Let $c \in \mathbb Z_n$ and for each $i \in \{1,2, \cdots, k\}$, let $s_i \in \mathbb N$ be the largest powers of prime $p_i$ such that $c \equiv 0~(mod ~p_i^{s_i})$. Then, $c < a$ and $ c < b$ if and only if $c(a-1)\equiv 0~(mod~n)$ and $c(b-1)\equiv 0~(mod~n)$ if and only if $s_i \geq (\alpha_i-\beta_i), (\alpha_i-\gamma_i)$ for all $i$ if and only if $c \in (\frac{n}{d})$.\\
  \indent Suppose $a\wedge b$ exists. Since $a$ and $b$ are incomparable, we have $ a\wedge b < a$ and $a \wedge b < b$. This yields $a\wedge b \in (\frac{n}{d})$. Let $x \in (\frac{n}{d})$. Then $x < a $ and $x < b$. Therefore $x < a\wedge b$. Thus $a\wedge b$ is the largest element of the ideal $(\frac{n}{d})$. Conversely, suppose that the ideal $(\frac{n}{d})$ has the largest element, say $e \in (\frac{n}{d})$. This yields $e < a$ and $e < b$. We claim that $a\wedge b =e$. Let $f \in \mathbb Z_n$ be such that $f < a$ and $f < b$. Then $f\in (\frac{n}{d})$. Therefore $f\leq e$. Thus $a\wedge b=e$.  	
  \end{proof} 
   From the proof of Theorem \ref{th308}, it is clear that if $a\wedge b$ exists, then $a \wedge b$ is the largest element of the ideal $(\frac{n}{d})$. Also, if the ideal $(\frac{n}{d})$ has the largest element $e$, then $a \wedge b=e$. \\
 \indent  The following corollary is an immediate consequence of Theorem \ref{th308}. 
   \begin{corollary} \label{cr402} Let $n\in \mathbb N,~ n > 1$ and $S'=\{d \in \mathbb N~|~ d=gcd(gcd(a-1,b-1),n),$ for some incomparable elements $a,b \in \mathbb Z_n \}$. Then, $\mathbb Z_n$ is a lattice if and only if every ideal in $\{(\frac{n}{d})~|~d\in S' \}$ has largest element.
   \end{corollary}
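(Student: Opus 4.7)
The plan is to apply Theorem \ref{th308} directly, after reducing to incomparable pairs. First I would observe that $\mathbb{Z}_n$ is a \emph{finite} poset with a smallest element $0$ and a largest element $1$, and that in any such poset, being a lattice is equivalent to every pair of elements having an infimum: for arbitrary $a,b$, the set $\{c\in\mathbb{Z}_n : c\le a \text{ and } c\le b\}$ is nonempty (it contains $0$) and finite, so if all pairs have infima then $a\wedge b$ is well defined; and the existence of $a\vee b$ for all pairs then follows dually, taking the infimum of the nonempty finite set of common upper bounds. So the question reduces to whether $a\wedge b$ exists for every pair $a,b$.

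Next I would dispatch the comparable case trivially: if $a\le b$, then $a\wedge b=a$. Hence $\mathbb{Z}_n$ is a lattice if and only if $a\wedge b$ exists for every pair of \emph{incomparable} elements $a,b\in\mathbb{Z}_n$. At this point Theorem \ref{th308} gives, for such a pair and with $d=\gcd(\gcd(a-1,b-1),n)$, that $a\wedge b$ exists precisely when the ideal $(\frac{n}{d})$ has a largest element. Quantifying over all incomparable pairs, the admissible values of $d$ form exactly the set $S'$ in the statement, so the condition rewrites as: every ideal in $\{(\frac{n}{d}) : d\in S'\}$ has a largest element.

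I do not anticipate any serious obstacle; the corollary is essentially a repackaging of Theorem \ref{th308}. The only points worth stating carefully are the finite-bounded-poset observation that lets us restrict attention to meets, and the matching of the indexing set $S'$ with the range of $d$ values coming from Theorem \ref{th308} as $(a,b)$ varies over incomparable pairs. An alternative route would be to argue by duality from Corollary \ref{cr401} (joins and meets are interchangeable in a finite bounded poset), but the direct appeal to Theorem \ref{th308} is cleaner and parallels the derivation of Corollary \ref{cr401} from Theorem \ref{th307}.
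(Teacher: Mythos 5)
Your proposal is correct and matches the paper's approach: the paper states Corollary \ref{cr402} as an immediate consequence of Theorem \ref{th308}, and your argument simply makes explicit the two routine ingredients (that a finite poset with $0$ and $1$ in which all pairwise meets exist is a lattice, and that $S'$ is exactly the range of $d$ over incomparable pairs). No gap; this is the intended derivation, parallel to how Corollary \ref{cr401} follows from Theorem \ref{th307}.
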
  
   \begin{corollary} \label{cr404} Let $n\in \mathbb N,~ n > 1$, $S=\{d \in \mathbb N~|~ d=gcd(gcd(a,b),n),$ for some incomparable elements $a,b \in \mathbb Z_n \}$ and $S'=\{d \in \mathbb N~|~ d=gcd(gcd(a-1,b-1),n),$ for some incomparable elements $a,b \in \mathbb Z_n \}$. Then, every ideal in $\{ (\frac{n}{d})~|~d \in S'\}$ has largest element if and only if every coset in $\{ (\frac{n}{d})+1~|~d \in S\}$ has  smallest element.
   \end{corollary}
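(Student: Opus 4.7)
The plan is to observe that Corollary \ref{cr404} is essentially a tautological consequence of the two preceding corollaries, rather than something that needs to be re-derived from scratch. By Corollary \ref{cr401}, the statement ``every coset in $\{(\frac{n}{d})+1 \mid d \in S\}$ has a smallest element'' is equivalent to $\mathbb Z_n$ being a lattice. By Corollary \ref{cr402}, the statement ``every ideal in $\{(\frac{n}{d}) \mid d \in S'\}$ has a largest element'' is also equivalent to $\mathbb Z_n$ being a lattice. Hence the two conditions are equivalent to each other, giving both directions of the biconditional simultaneously.

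Concretely, I would first cite Corollary \ref{cr402} to replace the hypothesis ``every ideal in $\{(\frac{n}{d}) \mid d \in S'\}$ has a largest element'' with ``$\mathbb Z_n$ is a lattice''. Then I would invoke Corollary \ref{cr401} to replace ``$\mathbb Z_n$ is a lattice'' with ``every coset in $\{(\frac{n}{d})+1 \mid d \in S\}$ has a smallest element''. The converse direction is the same chain read in reverse. No further machinery is needed, since the heavy lifting (computing $d$, identifying the relevant coset/ideal, and verifying the least/greatest element characterization) has already been carried out in the proofs of Theorems \ref{th307} and \ref{th308}.

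The only conceptual subtlety worth noting is that, implicitly, this uses the standard fact that in a finite bounded poset (such as $\mathbb Z_n$, which has least element $0$ and greatest element $1$) the existence of all binary joins is equivalent to the existence of all binary meets; this is what makes both Corollaries \ref{cr401} and \ref{cr402} characterize the same lattice property, so the equivalence in Corollary \ref{cr404} is not an accident. I do not anticipate any real obstacle here: the argument is a two-line diagram chase through the already-established equivalences, and the main thing to be careful about is simply making the citations precise and noting that the sets $S$ and $S'$ defined in the hypothesis are exactly the sets appearing in Corollaries \ref{cr401} and \ref{cr402} respectively.
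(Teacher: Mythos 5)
Your proposal is correct and is exactly the paper's own argument: the paper's proof of Corollary \ref{cr404} reads, in full, ``Follows from Corollaries \ref{cr401} and \ref{cr402},'' i.e.\ both conditions are equated with ``$\mathbb Z_n$ is a lattice'' and hence with each other. Your added remark about the finite bounded poset $(\mathbb Z_n,\leq)$ having all joins iff it has all meets is a useful explicit acknowledgement of the fact the paper uses tacitly in stating \ref{cr401} and \ref{cr402} as characterizations of the same lattice property.
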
 	
   \begin{proof} Follows from Corollaries \ref{cr401} and \ref{cr402}.
   \end{proof} 
   
  The following two lemmas relate the largest element of an ideal with the smallest element of a coset and vice versa. 
   \begin{lemma} \label{nlm302} Let $n=n_1 n_2$ with $n_1 \geq 1, n_2 \geq 3$ and $I=(n_1)$, $J=(n_2)$. Then, the largest element of the ideal $I$ becomes the smallest element of the coset $J+1$.   
   \end{lemma}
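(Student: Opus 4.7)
The goal is: if $x \in I = (n_1)$ is the largest element of $I$ under the order $\leq$, then $x$ is also the smallest element of the coset $J + 1 = (n_2) + 1$. My plan is to prove this in two separate steps. First, I would verify that $x \equiv 1 \pmod{n_2}$, so that $x$ actually lies in $J + 1$. Second, I would directly check that $x \leq z$ for every $z \in J + 1$, which, together with $x \in J+1$, establishes that $x$ is the least element of this coset.

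For the first step I would unpack the maximality of $x$. Writing $x = k_0 n_1$ and an arbitrary element of $I$ as $y = k n_1$ with $k, k_0 \in \{0, 1, \ldots, n_2 - 1\}$, the assumption $y \leq x$ for $y \neq x$ yields $y(1-x) \equiv 0 \pmod n$. Using $n = n_1 n_2$, this reduces to
\[
k(1-x) \equiv 0 \pmod{n_2} \qquad \text{for all } k \in \{0, 1, \ldots, n_2-1\} \setminus \{k_0\}.
\]
The hypothesis $n_2 \geq 3$ enters here: since $\varphi(n_2) \geq 2$, there exists some $k$ coprime to $n_2$ with $k \neq k_0$, and canceling this unit modulo $n_2$ delivers $x \equiv 1 \pmod{n_2}$, i.e.\ $x \in J+1$.

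The second step is a short computation. Given any $z \in J + 1$, write $z - 1 = m n_2$, and combine with $x = l n_1$ (from $x \in I$) to obtain $x(1-z) = -lm\,n_1 n_2 = -lm\,n \equiv 0 \pmod n$, so $x \leq z$. Combined with $x \in J+1$, this shows $x$ is the smallest element of $J + 1$.

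The main obstacle is the first step in the subcase $k_0 = 1$ (that is, $x = n_1$ itself), where the naive choice $y = n_1$ coincides with $x$ and so cannot be used to extract $x \equiv 1 \pmod{n_2}$. The existence of a second unit in $\mathbb{Z}_{n_2}$, guaranteed exactly by $n_2 \geq 3$, is what circumvents this pitfall and clarifies why the hypothesis $n_2 \geq 3$ appears in the statement; the argument genuinely fails for $n_2 = 2$, as one can verify with $n=4$, $n_1=n_2=2$.
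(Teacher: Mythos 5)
Your proof is correct and follows essentially the same route as the paper's: use maximality of $x$ against an element $kn_1\in I$ whose coefficient $k$ is a unit modulo $n_2$ to force $x\equiv 1\pmod{n_2}$, then verify minimality in $J+1$ by the direct computation $x(1-z)\equiv 0\pmod n$. The only cosmetic difference is the choice of witness: the paper tests against the two specific elements $n_1$ and $-n_1$ (distinct since $n_2\geq 3$) and rules out the degenerate branch of the order relation, whereas you invoke $\varphi(n_2)\geq 2$ to pick some unit $k\neq k_0$; both hinge on $n_2\geq 3$ in the same way.
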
	
   \begin{proof} Since $|I|=n_2 \geq 3$, we have $n_1 \nequiv -n_1 ~(mod ~n)$. Therefore $n_1$ and $-n_1$ are distinct elements in $I$. Let $e_1n_1\in I$ be the largest element of $I$. Then $x_1n_1 \leq e_1n_1$ for all $x_1\in \mathbb Z$.
   This yields $n_1 \leq e_1n_1$ and $-n_1 \leq e_1n_1$. 
   That is $n_1\equiv e_1n_1~(mod~n)$ or $n_1\equiv n_1e_1n_1~(mod~n)$; and $-n_1\equiv e_1n_1~(mod~n)$ or $-n_1\equiv -n_1e_1n_1~(mod~n)$.
   If $n_1\equiv e_1n_1~(mod~n)$ and $-n_1\equiv e_1n_1~(mod~n)$, then $n_1 \equiv -n_1 ~(mod~n)$, a contradiction to the fact that $n_2\geq 3$. Thus, either $n_1\equiv n_1e_1n_1~(mod~n)$ or $-n_1\equiv -n_1e_1n_1~(mod~n)$. Suppose $n_1\equiv n_1e_1n_1~(mod~n)$. That is $n_1(e_1n_1-1)\equiv 0~(mod~n_1n_2)$. This implies that $e_1n_1-1\equiv 0~(mod~n_2)$, hence $e_1n_1\in J+1$. Similarly, $-n_1\equiv -n_1e_1n_1~(mod~n)$ implies that $e_1n_1\in J+1$. Thus, in any case, $e_1n_1\in J+1$.    
   Let $y_2n_2+1 \in J+1$ be any element. Then $(y_2n_2+1)e_1n_1=y_2n_2e_1n_1+e_1n_1 \equiv e_1n_1~(mod~ n)$. Thus $e_1n_1$ is the smallest element of $J+1$.
\end{proof}
   \begin{lemma} \label{lm303} Let $n=n_1 n_2$ with $n_1 \geq 3, n_2 \geq 1$ and $I=(n_1)$, $J=(n_2)$. Then the smallest element of the coset $J+1$ becomes the largest element of the ideal $I$.  
   \end{lemma}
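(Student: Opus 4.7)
The plan is to mirror the strategy of Lemma \ref{nlm302}, with the roles of ideal and coset interchanged. Let $e = yn_2 + 1$ denote the smallest element of the coset $J+1$. The argument splits naturally into two parts: first show that $e$ lies in the ideal $I = (n_1)$, and then verify that every element of $I$ is $\leq e$.

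For the first part, I would exhibit two witnesses in $J+1$, namely $1+n_2$ and $1-n_2$, and use the hypothesis $n_1 \geq 3$ to guarantee they are distinct in $\mathbb Z_n$ (since $2n_2 \equiv 0 \pmod n$ would force $n_1 \mid 2$). As $e$ is the smallest element of $J+1$, both relations $e \leq 1+n_2$ and $e \leq 1-n_2$ hold. Unpacking the definition of the order, each relation yields either equality with the corresponding witness or $e\cdot n_2 \equiv 0 \pmod n$, which is equivalent to $n_1 \mid e$. A short case analysis rules out the only bad combination, namely $e = 1+n_2$ and $e = 1-n_2$ simultaneously, because this would give $2n_2 \equiv 0 \pmod n$, contradicting $n_1 \geq 3$. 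In the remaining cases we obtain $n_1 \mid e$, hence $e \in I$.

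For the second part, given an arbitrary $xn_1 \in I$, I would compute directly $(xn_1)\cdot e = (xn_1)(yn_2 + 1) = xy\cdot n_1 n_2 + xn_1 \equiv xn_1 \pmod n$, which by the definition of $\leq$ yields $xn_1 \leq e$. Consequently $e$ is the largest element of $I$, finishing the proof.

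The main obstacle is keeping the case analysis in the first step compact while ensuring the hypothesis $n_1 \geq 3$ is invoked at precisely the right place, namely to guarantee the distinctness of the two witnesses $1 \pm n_2$ in $J+1$. Everything else reduces to a single modular computation.
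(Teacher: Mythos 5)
Your proposal is correct and follows essentially the same route as the paper's own proof: it uses the two witnesses $1+n_2$ and $1-n_2$ in $J+1$ (distinct because $n_1\geq 3$), unwinds $e\leq 1\pm n_2$ to force $en_2\equiv 0\ (mod\ n)$ and hence $e\in I$, and concludes with the computation $(xn_1)e\equiv xn_1\ (mod\ n)$. No gaps.
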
    	
    \begin{proof} Since $|J|=n_1 \geq 3$, we have $n_2 \nequiv -n_2 ~(mod ~n)$. Therefore $n_2+1$ and $-n_2+1$ are distinct elements in the coset $J+1$. Let $e_2n_2+1\in J+1$ be the smallest element of $J+1$. Then $e_2n_2+1\leq x_2n_2+1$ for all $x_2\in \mathbb Z$.
    	This yields $e_2n_2+1\leq n_2+1$ and $e_2n_2+1\leq -n_2+1$. 
    	That is $e_2n_2+1\equiv n_2+1~(mod~n)$ or $e_2n_2+1\equiv (e_2n_2+1) (n_2+1)~(mod~n)$; and $e_2n_2+1\equiv -n_2+1~(mod~n)$ or $e_2n_2+1\equiv (e_2n_2+1)(-n_2+1)~(mod~n)$.    	
    	If $e_2n_2+1\equiv n_2+1~(mod~n)$ and $e_2n_2+1\equiv -n_2+1~(mod~n)$, then $n_2+1 \equiv -n_2+1 ~(mod~n)$, a contradiction to the fact that $n_1\geq 3$. Thus, either $e_2n_2+1\equiv (e_2n_2+1) (n_2+1)~(mod~n)$ or $e_2n_2+1\equiv (e_2n_2+1)(-n_2+1)~(mod~n)$.     	
    	Suppose $e_2n_2+1\equiv (e_2n_2+1) (n_2+1)~(mod~n)$. That is $(e_2n_2+1)(n_2)\equiv 0~(mod~n_1n_2)$. This implies that $e_2n_2+1\equiv 0~(mod~n_1)$, hence $e_2n_2+1\in I$. Similarly,  $e_2n_2+1\equiv (e_2n_2+1)(-n_2+1)~(mod~n)$ implies that $e_2n_2+1\in I$. Thus, in any case, $e_2n_2+1\in I$.    
    	Let $y_1n_1 \in I$ be any element. Then $(y_1n_1)(e_2n_2+1)=y_1n_1e_2n_2+y_1n_1 \equiv y_1n_1~(mod~ n)$. Thus $e_2n_2+1$ is the largest element of $I$.    	
   \end{proof}  
   \begin{remark} \label{rm303} Let $a\in GP(\mathbb Z_n)$ and $I$ be the ideal generated by $a$. Then $a_u$ is the largest element of $I$. For $ma \in I$, $maa_u=ma$, hence $ma \leq a_u$. 
   \end{remark}
   If $a,b \in GP(\mathbb Z_n)$ then $a\vee b$ and $a\wedge b$ both exists in the poset $GP(\mathbb Z_n)$ (see \cite{Anil}). The following two theorems gives the existence of  $a\vee b$ and $a\wedge b$ in the poset $\mathbb Z_n$ where $a,b \in GP(\mathbb Z_n)$. 
   \begin{theorem} \label{th309} If $a, b \in GP(\mathbb Z_n)$ then $a \vee b$ exists in the poset $\mathbb Z_n$. Further, $a\vee b \in GP(\mathbb Z_n)$. 	
   \end{theorem}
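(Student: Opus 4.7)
The plan is to reduce to Theorem \ref{th307}. If $a$ and $b$ are comparable, the join is trivially the larger one, so I would assume $a,b\in GP(\mathbb Z_n)$ are incomparable, set $d=\gcd(\gcd(a,b),n)$, and exhibit a smallest element of the coset $(\tfrac{n}{d})+1$ that lies in $GP(\mathbb Z_n)$.

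First I would identify $d$ explicitly using Remark \ref{rm202}(iii): each of $a,b$ is $\equiv 0$ or a unit modulo every $p_i^{\alpha_i}$. Letting $I=\{\,i:a\equiv 0\text{ and }b\equiv 0\pmod{p_i^{\alpha_i}}\,\}$, this gives $\gcd(a,n)=\prod_{i:a\equiv 0}p_i^{\alpha_i}$ and similarly for $b$, so $d=\prod_{i\in I}p_i^{\alpha_i}$ and $\tfrac{n}{d}=\prod_{j\notin I}p_j^{\alpha_j}$. By the Chinese Remainder Theorem I then define $c\in\mathbb Z_n$ by $c\equiv 0\pmod{p_i^{\alpha_i}}$ for $i\in I$ and $c\equiv 1\pmod{p_j^{\alpha_j}}$ for $j\notin I$. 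Then $c$ is idempotent, so $c\in P(\mathbb Z_n)\subseteq GP(\mathbb Z_n)$, and $c-1\equiv 0\pmod{p_j^{\alpha_j}}$ for every $j\notin I$, i.e., $c\in(\tfrac{n}{d})+1$.

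The main step is to check that $c$ is the smallest element of $(\tfrac{n}{d})+1$ with respect to `$\leq$'. For any $c'\in(\tfrac{n}{d})+1$, a residue-by-residue computation yields $cc'\equiv c\pmod n$: for $j\notin I$ both $c$ and $c'$ are $\equiv 1\pmod{p_j^{\alpha_j}}$, so $cc'\equiv 1\equiv c$; and for $i\in I$ we have $c\equiv 0\pmod{p_i^{\alpha_i}}$, so $cc'\equiv 0\equiv c$. Hence $c\leq c'$, and Theorem \ref{th307} gives $a\vee b=c\in GP(\mathbb Z_n)$. I do not foresee any serious obstacle; the only point requiring a bit of care is the explicit formula for $d$ in terms of the prime-power residues of $a$ and $b$, which is immediate from Remark \ref{rm202}(iii).
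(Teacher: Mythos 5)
Your proposal is correct, and it reaches the same conclusion by a genuinely more direct construction than the paper's. Both arguments share the same skeleton: dispose of the comparable case, set $d=\gcd(\gcd(a,b),n)$, and invoke Theorem \ref{th307} by producing a smallest element of the coset $(\tfrac{n}{d})+1$. The difference lies entirely in how that smallest element is produced. The paper argues abstractly: it notes via Remark \ref{rm202}(iii) that $d\in GP(\mathbb Z_n)$, uses Remark \ref{rm303} to conclude that the ideal $(d)$ has a largest element (namely $d_u$, which is a projection), checks $|(d)|=\tfrac{n}{d}\geq 3$ from the incomparability of $a$ and $b$, and then applies the transfer result Lemma \ref{nlm302} to convert the largest element of $(d)$ into the smallest element of $(\tfrac{n}{d})+1$. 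You instead write down the answer explicitly: the idempotent $c$ with $c\equiv 0$ modulo $p_i^{\alpha_i}$ for $i$ in your index set and $c\equiv 1$ modulo the remaining prime powers, and you verify minimality in the coset by a one-line residue computation ($cc'\equiv c$). Your $c$ is of course the same element as the paper's $d_u=d^{\varphi(n/d)}$, but your route avoids both Remark \ref{rm303} and Lemma \ref{nlm302} (and the $\tfrac{n}{d}\geq 3$ bookkeeping that Lemma \ref{nlm302} requires), at the cost of the explicit prime-by-prime identification of $d$, which you correctly justify from Remark \ref{rm202}(iii). The paper's approach buys reusability of the ideal-to-coset transfer lemma, which it needs again in the main lattice characterization; yours buys a shorter, self-contained proof of this particular theorem.
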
 
   \begin{proof} If $a$ and $b$ are comparable then clearly $a \vee b$ exists and $a \vee b \in GP(\mathbb Z_n)$. Suppose $a$ and  $b$ are incomparable. Let $d=gcd(gcd(a,b),n)$, $I$ be the ideal generated by $d$ and $J$ be the ideal generated by $\frac{n}{d}$. As, $a,b \in GP(\mathbb Z_n)$, by Remark \ref{rm202}(iii), $gcd(a,b)\in GP(\mathbb Z_n)$. Therefore   	
   	 $d\in GP(\mathbb Z_n)$. By Remark \ref{rm303}, the ideal $I$ possesses the largest element, say $e$ and $e\in GP(\mathbb Z_n)$. Since $d|a$ and $d|b$, we have $a,b \in I$. As, $a$ and $b$ are incomparable, we have $|I|=\frac{n}{d} \geq 3$. By Lemma \ref{nlm302}, $e$ becomes the smallest element of the coset $J+1$. By Theorem \ref{th307}, $a\vee b$ exists and $a\vee b= e$. Thus, $a\vee b =e \in GP(\mathbb Z_n)$.  
   \end{proof}
    \begin{theorem} \label{th3010} Let $a, b \in GP(\mathbb Z_n)$ be such that $a-1, b-1 \in GP(\mathbb Z_n)$. Then $a \wedge b$ exists in the poset $\mathbb Z_n$ and $a \wedge b \in GP(\mathbb Z_n)$. 	
    \end{theorem}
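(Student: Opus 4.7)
The plan is to mirror the strategy of Theorem~\ref{th309}, now using Theorem~\ref{th308} as the meet-criterion. First I would dispose of the trivial case: if $a$ and $b$ are comparable, then $a\wedge b$ is whichever of $a,b$ is the smaller, and it already lies in $GP(\mathbb{Z}_n)$. So assume $a$ and $b$ are incomparable, set $c=\gcd(a-1,b-1)$ and $d=\gcd(c,n)$, and reduce, via Theorem~\ref{th308}, to producing a largest element of the ideal $(\frac{n}{d})$ and checking that it lies in $GP(\mathbb{Z}_n)$.

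The key intermediate claim is that $\frac{n}{d}$ itself is a generalized projection. To see this, I would first show $c\in GP(\mathbb{Z}_n)$: by Remark~\ref{rm202}(iii), modulo each $p_i^{\alpha_i}$ both $a-1$ and $b-1$ are either $0$ or a unit; taking the gcd preserves this property (two zeros yield a zero, while a single unit forces $p_i\nmid c$ and hence makes $c$ a unit mod $p_i^{\alpha_i}$). Applying Theorem~\ref{th103}(iv) to $c$ then yields $d\mid n$ and $\gcd(d,\tfrac{n}{d})=1$. These conditions are symmetric in $d$ and $\tfrac{n}{d}$, so Theorem~\ref{th103}(iv) in the other direction delivers $\tfrac{n}{d}\in GP(\mathbb{Z}_n)$.

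With $\frac{n}{d}\in GP(\mathbb{Z}_n)$ in hand, Remark~\ref{rm303} immediately provides the largest element of the ideal $(\frac{n}{d})$, namely its upper covering projection $(\frac{n}{d})_u$, which is itself in $GP(\mathbb{Z}_n)$. Theorem~\ref{th308} then delivers both conclusions at once: $a\wedge b$ exists and equals $(\frac{n}{d})_u\in GP(\mathbb{Z}_n)$.

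The only step that requires genuine thought is the implication ``$d\in GP(\mathbb{Z}_n)\Rightarrow\tfrac{n}{d}\in GP(\mathbb{Z}_n)$,'' since Theorem~\ref{th309} instead converts ``largest of $(d)$'' into ``smallest of $(\tfrac{n}{d})+1$'' via Lemma~\ref{nlm302}. One could mimic that route here by introducing the complementary projection $1-d_u$ and using the identity $(\tfrac{n}{d})=(1-d_u)$, whose largest element is $1-d_u$ itself; but the symmetric form of Theorem~\ref{th103}(iv) makes the shortcut above available, and that is the line I would take.
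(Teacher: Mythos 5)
Your proof follows the paper's argument exactly: dispose of the comparable case, show $d=\gcd(\gcd(a-1,b-1),n)\in GP(\mathbb Z_n)$ via Remark \ref{rm202}(iii), deduce $\tfrac{n}{d}\in GP(\mathbb Z_n)$, obtain the largest element of the ideal $(\tfrac{n}{d})$ from Remark \ref{rm303}, and conclude with Theorem \ref{th308}. Your explicit justification of the step ``$d\in GP(\mathbb Z_n)\Rightarrow \tfrac{n}{d}\in GP(\mathbb Z_n)$'' through the symmetry of condition (iv) in Theorem \ref{th103} is a detail the paper leaves implicit, but the route is the same.
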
 
    \begin{proof} If $a$ and $b$ are comparable then clearly $a \wedge b$ exists and $a\wedge b \in GP(\mathbb Z_n)$. Suppose $a$ and  $b$ are incomparable. Let $d=gcd(gcd(a-1,b-1),n)$ and $I$ be the ideal generated by $\frac{n}{d}$.  As, $a-1, b-1 \in GP(\mathbb Z_n)$, by Remark \ref{rm202}(iii), $gcd(a-1, b-1)\in GP(\mathbb Z_n)$. Therefore $d\in GP(\mathbb Z_n)$, and hence  $\frac{n}{d}\in GP(\mathbb Z_n) $. By Remark \ref{rm303}, the ideal $I$ possesses the largest element, say $e$ and $e\in GP(\mathbb Z_n)$. By Theorem \ref{th308}, $a\wedge b$ exists and $a\wedge b=e$. Thus $a\wedge b=e \in GP(\mathbb Z_n)$. 
    \end{proof}
    \indent In the following theorem, we give a necessary and sufficient condition for the poset $\mathbb Z_n$ to be a lattice.  	   
 \begin{theorem} Let $n=\displaystyle \prod_{i=1}^k p_i^{\alpha_i}$ be the prime factorization of $n \in \mathbb N$ with $\alpha_i > 0$, for all $i$. Then,  $\mathbb Z_n$ is  a lattice if and only if for every $n_1 \geq 3$ with $n=n_1 n_2$, $(n_1)$ possess the largest element.   	
 \end{theorem}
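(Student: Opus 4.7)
The plan is to prove both directions by exploiting the correspondence, established in Lemmas \ref{nlm302} and \ref{lm303}, between largest elements of ideals $(n_1)$ and smallest elements of cosets $(n_2) + 1$.

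For the forward direction, fix a divisor $n_1 \geq 3$ of $n$ and set $n_2 = n/n_1$. The cases $n_2 \in \{1, 2\}$ are handled immediately, since $(n_1)$ then has at most two elements and a largest one is evident. For $n_2 \geq 3$, I would first verify the key observation that every element of the coset $(n_2) + 1$ is an upper bound of the subset $(n_1)$ in $\mathbb{Z}_n$: for $u = 1 + kn_2$ and $jn_1 \in (n_1)$, $(jn_1)u = jn_1 + jkn \equiv jn_1 \pmod n$, so $jn_1 \leq u$. Since $\mathbb{Z}_n$ is a finite lattice, the subset $(n_1)$ has a supremum $s$, and as the least upper bound $s \leq u$ for every $u \in (n_2) + 1$. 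The relation $n_1 \leq s$ then forces either $s = n_1$ or $n_2 \mid s - 1$. In the first case $s = n_1 \in (n_1)$ is itself the largest element of $(n_1)$; in the second case $s$ lies in $(n_2) + 1$ and is below every element of this coset, hence is its smallest element, which Lemma \ref{lm303} (valid since $n_1 \geq 3$) identifies with the largest element of $(n_1)$.

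For the converse, I would invoke Corollary \ref{cr401}, which reduces the task to showing that for every $d \in S$ the coset $(n/d) + 1$ has a smallest element. If $d \in S$ is witnessed by incomparable $a, b$, then $a$ and $b$ are distinct nonzero elements of $(d)$ (the bottom $0$ is comparable with everything), so $|(d)| \geq 3$ and $m := n/d \geq 3$. When $d \in \{1, 2\}$ the coset $(m) + 1$ has cardinality at most two and always contains the top element $1$, so a smallest element is present by inspection. When $d \geq 3$, the hypothesis provides a largest element of $(d)$, and Lemma \ref{nlm302} (applicable because $m \geq 3$) transports this into the smallest element of $(m) + 1$.

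The main obstacle lies in the forward direction: proving the structural claim that the supremum of the subset $(n_1) \subseteq \mathbb{Z}_n$ must belong to $(n_1) \cup ((n_2) + 1)$. Once this location is pinned down, Lemma \ref{lm303} closes the argument immediately, and the converse reduces to routine case-checking against Corollary \ref{cr401}.
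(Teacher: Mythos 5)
Your proof is correct, and its backward direction is essentially the paper's: both reduce via Corollary \ref{cr401} to producing a smallest element of $(\frac{n}{d})+1$ for each $d\in S$, extract $n/d\geq 3$ from the incomparable witnesses $a,b$ together with $0$, dispose of $d\leq 2$ by inspection, and invoke Lemma \ref{nlm302} when $d\geq 3$. The forward direction, however, takes a genuinely different route. The paper argues by contradiction: assuming $(n_1)$ has no largest element (so $n_2\geq 3$), it constructs explicit incomparable elements $a=n_1$ and $b=pn_1$ for a prime $p$ with $\gcd(p,n_2)=1$ and $n_2\nmid (p-1)$, computes $\gcd(\gcd(a,b),n)=n_1$, and combines Lemma \ref{lm303} with Corollary \ref{cr401} to contradict the lattice hypothesis; this requires verifying incomparability and the existence of such a prime. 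You instead argue directly: using completeness of the finite lattice you form $s=\sup\,(n_1)$, observe that every element of $(n_2)+1$ is an upper bound of $(n_1)$ so that $s$ lies below the whole coset, and use $n_1\leq s$ to force either $s=n_1$ or $s\equiv 1\ (mod\ n_2)$; in the first case $s$ is already the largest element of $(n_1)$, and in the second $s$ is the smallest element of $(n_2)+1$, which Lemma \ref{lm303} converts into the largest element of $(n_1)$. Your approach buys a cleaner, constructive identification of the largest element of $(n_1)$ as $\sup\,(n_1)$ and avoids both the auxiliary prime and the incomparability check, at the mild cost of appealing to the completeness of finite lattices rather than staying within the pairwise join criterion of Theorem \ref{th307} as the paper does.
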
 
  \begin{proof} Suppose $\mathbb Z_n$ is  a lattice. Let $n=n_1 n_2$ with $n_1 \geq 3$. If $(n_1)$ does not possess the largest element, then $|(n_1)|=n_2 \geq 3$. 
  	Let $a=n_1$; and $b=p n_1$, where $p$ is a prime such that $gcd(p, n_2)=1$ and $n_2\nmid (p-1)$. Then $a,b \nequiv 0~(mod~n)$ and $a \nequiv b~(mod~n)$. If $a < b$, then $a \equiv a b~(mod~n)$. That is $n_1\equiv n_1 p n_1~(mod~n)$. This yields $n_1(pn_1-1) \equiv 0~(mod~n)$. This implies that $pn_1 \equiv 1~(mod~n_2)$. Hence $gcd(n_1, n_2)=1$. Consequently $n_1\in GP(\mathbb Z_n)$. By Remark \ref{rm303}, $(n_1)$ possess the largest element, a contradiction. Therefore $a \nleq b$. Similarly, $b \nleq a$. Thus $a$ and $b$ are incomparable. Observe that $gcd(gcd(a,b),n)=n_1$. As, $(\frac{n}{n_2})=(n_1)$ does not possess the largest element.
  	By Lemma \ref{lm303}, the coset $(n_2)+1$ does not possess the smallest element.	
  	 By Corollary \ref{cr401}, $\mathbb Z_n$ is not a lattice, a contradiction. Therefore $(n_1)$ possess the largest element. Conversely, suppose for every $n_1 \geq 3$ with $n=n_1 n_2$, $(n_1)$ possess the largest element. If $\mathbb Z_n$ is not a lattice, then by Corollary \ref{cr401}, there exists incomparable elements $a', b' \in \mathbb Z_n$ such that $d'=gcd(gcd(a',b'), n)$ and $(\frac{n}{d'})+1$ does not possess the smallest element.
  	Therefore $ \frac{n}{d'}=|(d')|\geq 3$ and $|(\frac{n}{d'})+1|\geq 3$. Hence $|(\frac{n}{d'})|=d' \geq 3$.
  	 Let $n_1'=d'$ and $n_2'=\frac{n}{d'}$. Then $n_1' \geq 3$ and $n=n_1'n_2'$. By Lemma \ref{nlm302}, $(n_1')$ does not possess the largest element, a contradiction. Thus $\mathbb Z_n$ is a lattice.	   
  \end{proof}
  \begin{remark} \label{rm302} Let $a \in N(\mathbb Z_n)\backslash \{0\}$.  If $b \in \mathbb Z_n$ be such that $b < a$ then $b=ba=ba^m$ for any $m \in \mathbb N$. Since $a \in N(\mathbb Z_n)$, we have $b=0$. Hence $a_l=0$. From this, it follows that, if $I$ is an ideal generated by a nilpotent element of $\mathbb Z_n$ such that $|I|\geq 3$, then $I$ does not possess the largest element. Thus $\mathbb Z_n$ is not a lattice.
  \end{remark}	

$$\diamondsuit\diamondsuit\diamondsuit$$


\begin{thebibliography}{99}
                  
             
\bibitem {Anil}  Anil Khairnar and B. N. Waphare, {\em Order properties of generalized projections}, Linear and Multilinear Algebra (2016), DOI:10.1080/03081087.2016.1242554.                                 
                  
\bibitem {Las} L$\acute{\text a}$szl$\acute{\text o}$ T$\acute{\text o}$th,
{\em Regular integers modulo $n$},
Annales Univ. Sci. Budapest., Comp., {\bf 29} (2008), 263-275.  


\end{thebibliography}
\end{document}